\newcommand{\Inv}[1]{\frac{1}{#1}}
\newcommand{\kgs}{\lrp{\frac{1}{K^*} + \frac{1}{G^*}}}
\newcommand{\gks}{\lrp{\frac{1}{K^*} - \frac{1}{G^*}}}
\newcommand{\kg}{\lrp{\frac{1}{K} + \frac{1}{G}}}
\newcommand{\gk}{\lrp{\frac{1}{K} - \frac{1}{G}}}
\newcommand{\di}{\dashint}
\newcommand{\dint}{\di}
\def\Xint#1{\mathchoice
{\XXint\displaystyle\textstyle{#1}}%
{\XXint\textstyle\scriptstyle{#1}}%
{\XXint\scriptstyle\scriptscriptstyle{#1}}%
{\XXint\scriptscriptstyle\scriptscriptstyle{#1}}%
\!\int}
\def\XXint#1#2#3{{\setbox0=\hbox{$#1{#2#3}{\int}$ }
\vcenter{\hbox{$#2#3$ }}\kern-.6\wd0}}
\def\dashint{\Xint-}
\newcommand{\eii}{\e_{11}}
\newcommand{\ejj}{\e_{22}}
\newcommand{\eij}{\e_{12}}
\newcommand{\eIJ}{\e_{ij}}
\newcommand{\sii}{\sigma_{11}}
\newcommand{\sjj}{\sigma_{22}}
\newcommand{\sij}{\sigma_{12}}
\newcommand{\xii}{\xi_{11}}
\newcommand{\xjj}{\xi_{22}}
\newcommand{\xij}{\xi_{12}}
\newcommand{\wij}{\omega_{12}}
\newcommand{\wji}{\omega_{21}}
\newcommand{\wIJ}{\omega_{ij}}
\newcommand{\as}[1]{\section{#1}}
\newtheorem{tm}{Theorem}
\newtheorem{lm}{Lemma}
\newtheorem{rem}{Remark}
\newcommand{\inv}{^{-1}}
\newcommand{\mop}[1]{\mathop{\mathrm{#1}}}
\newcommand{\ddt}[2]{\frac{\partial #1}{\partial #2}}
\newcommand{\set}[1]{\left\{ #1 \right\}}
\newcommand{\lrp}[1]{\left(#1\right)}
\newcommand{\lrpb}[1]{\Bigl(#1\Bigr)}
\newcommand{\ave}[1]{\langle#1\rangle}
\newcommand{\lran}[1]{\langle#1\rangle}
\newcommand{\re}[1]{(\ref{#1})}
\newcommand{\e}{{\varepsilon}}
\newcommand{\mat}[1]{\begin{pmatrix}#1\end{pmatrix}}
\newcommand{\rr}{\mathbf{R}}
\newcommand{\zz}{\mathbf{Z}}
\newcommand{\dx}{\,dx}
\newcommand{\dy}{\,dy}
\newcommand{\Rmnum}[1]{\expandafter\@slowromancap\romannumeral #1@}
\newdimen\tildemin \tildemin=1.04em 
\providecommand{\ave}[1]{\buildrel
  \hbox{\mathsurround=0pt\relax $\smash{
   \lower1.2ex\hbox{$
    \setbox 0=\hbox{$\hphantom{#1}$}
     \ifdim\wd 0<\tildemin   
       \widetilde{\hphantom{#1}}^{m}
     \else
       \setbox 2=\hbox{$^{iii}$}
       \dimen 0=\wd 0\advance\dimen 0-\wd2\relax
       \widetilde{\hbox to\dimen 0{\hss$\hphantom{#1}$}}^{m}%
     \fi
    $}}$}\over{#1}}
\renewcommand{\ave}[1]{\buildrel
  \hbox{\mathsurround=0pt\relax $\smash{
   \lower1.2ex\hbox{$
    \setbox 0=\hbox{$\hphantom{#1}$}
     \ifdim\wd 0<\tildemin   
       \widetilde{\hphantom{#1}}^{m}
     \else
       \setbox 2=\hbox{$^{iii}$}
       \dimen 0=\wd 0\advance\dimen 0-\wd2\relax
       \widetilde{\hbox to\dimen 0{\hss$\hphantom{#1}$}}^{m}%
     \fi
    $}}$}\over{#1}}
\author{Dag Lukkassen\footnote{Narvik University College, P.O. Box 385, N-8505 Narvik, Norway, and Norut Narvik, P.O. Box 250, N-8504 Narvik, Norway.}, Annette Meidell\footnotemark[\value{footnote}], and  Klas Pettersson\footnote{Narvik University College, P.O. Box 385, N-8505 Narvik, Norway.}}
\title{An elementary proof of the Vigdergauz equations for a class of square symmetric structures}
\begin{document}
\maketitle

\begin{abstract}
For a periodically perforated structure, for which homogenization takes place
  in the linear theory of elasticity, the components of the effective elasticity
  tensor depend in general on the geometry of the holes as well as on the local
  elastic properties.
These dependencies were shown by Vigdergauz in~\cite{vigder} to be separated
  in an elementary way for one particular class of structures.
The original proof of this relation made use of the lattice approach
  to describe periodic functions using complex variables. 
In this paper we present a proof of the so-called Vigdergauz
  equations for a related class of square symmetric structures.
Our proof relies solely on the fundamental theorem of real variable calculus.
The differences between the two mentioned classes of structures are
  nontrivial which makes our result a partial generalization as well.
\end{abstract}


\as{Introduction}

In~\cite{vigder} Vigdergauz showed an explicit relation between the local and
  the effective elastic properties for a special class of periodic structures
  in the linear theory of elasticity.
The result relates the components of the effective elasticity tensor to the
  local elastic properties as elementary functions involving one geometric
  constant for each distinct component of the effective tensor.
For a subclass of the square symmetric planar structures with local
  elastic moduli $(K,G)$, the relation to the corresponding effective properties
  $(K^*,G^*,G^*_{45})$ can be written
  \begin{align*}
  \Inv{K^*} & = \Inv{K} + A_1\kg,\\
  \Inv{G^*} & = \Inv{G} + A_2\kg,\\
  \Inv{G^*_{45}} & = \Inv{G} + A_3\kg,
  \end{align*}
  where $(A_1,A_2,A_3)$ are geometric constants that depend solely on the
  domain of a periodicity cell of the structure.

The main restrictions defining the class of structures studies
  in~\cite{vigder} were that the periodicity cell should contain only one
  hole which is centered strictly inside the cell.
In addition the boundary of the cell was supposed to be smooth.
An example of such a structure is shown in Figure~\ref{fig:structures}(a).

In this paper we will consider the class obtained by relaxing the assumptions
  on the number of holes and the smoothness of the boundary.
More precisely, we consider the structures with positive definite square
  symmetric effective tensor, which have a cell with Lipschitz continuous
  boundary.
Moreover, we suppose that the cell can be transversed by some lines parallel
  to the coordinate axes.
The last requirement enables us to give a particularly simple proof of the
  explicit relations, which we call the Vigdergauz equations.

An example of a structure of the kind we will consider is shown in
  Figure~\ref{fig:structures}(b).
This is not of the type studied in~\cite{vigder} both because of the lack of
  smoothness and due to the number of holes in any peridicity cell.
Conversely, the structure shown in Figure~\ref{fig:structures}(a) is not in
  the class we will consider because no periodicity cell can be transversed
  by line segments.
Hence the classes have nontrivial differences and so our result gives a
  partial generalization of the Vigdergauz equations.

As an illustration we consider now the first two of the above equations.
To emphasize on the idea of proof we will make formal calculations and
  leave the precise definitions and arguments to the later sections.

Let $C$ be the domain of one period of a structure in $\rr^2$ and $Y$ the corresponding cell,
  which we assume is a rectangle.
We consider the case of linear elasticity and use the planar bulk modulus $K$ and
  shear modulus $G$ as the local material parameters. The stress $\sigma$ and the strain $\e$
  are then linearly related via the local version of the Hooke law:
  \begin{equation}
  \mat{\varepsilon _{11} \\ 
  \varepsilon _{22} \\ 
  \gamma _{12}}
  =
  \mat{
  \frac{1}{4}\left( \frac{1}{K}+\frac{1}{G}\right) & \frac{1}{4}\left( \frac{1%
  }{K}-\frac{1}{G}\right) & 0 \\ 
  \frac{1}{4}\left( \frac{1}{K}-\frac{1}{G}\right) & \frac{1}{4}\left( \frac{1%
  }{K}+\frac{1}{G}\right) & 0 \\ 
  0 & 0 & \frac{1}{G}%
  }
  \mat{\sigma _{11} \\ 
  \sigma _{22} \\ 
  \sigma _{12}},
  \end{equation}
  here written in Voigt notation and $\gamma_{12}$ is the engineering shear strain.
The matrix on the right hand side is the inverse of the representation of
  the elasticity tensor.
We write the two first equations explicitly as
  \begin{align*}
  \varepsilon _{11} &= \frac{1}{4}\left( \frac{1}{K}+\frac{1}{G}\right) \sigma
  _{11}+\frac{1}{4}\left( \frac{1}{K}-\frac{1}{G}\right) \sigma _{22}, \\
  \varepsilon _{22} &= \frac{1}{4}\left( \frac{1}{K}-\frac{1}{G}\right) \sigma
  _{11}+\frac{1}{4}\left( \frac{1}{K}+\frac{1}{G}\right) \sigma _{22}.
  \end{align*}
We assume that the strain components $\varepsilon _{ij}$, hence also the
  stress components $\sigma _{ij}$, are $Y$--periodic, where $Y=\left[ 0,l_{1}\right] \times \left[ 0,l_{2}\right]$, for some positive real
  numbers $l_1$ and $l_2$.
We also assume global quadratic symmetry.
Hence, we have the corresponding average relations
  \begin{align*}
  \left\langle \varepsilon _{11}\right\rangle &=\frac{1}{4}\left( \frac{1}{%
  K^{\ast }}+\frac{1}{G^{\ast }}\right) \left\langle \sigma
  _{11}\right\rangle +\frac{1}{4}\left( \frac{1}{K^{\ast }}-\frac{1}{%
  G^{\ast }}\right) \left\langle \sigma _{22}\right\rangle , \\
  \left\langle \varepsilon _{22}\right\rangle &=\frac{1}{4}\left( \frac{1}{%
  K^{\ast }}-\frac{1}{G^{\ast }}\right) \left\langle \sigma
  _{11}\right\rangle +\frac{1}{4}\left( \frac{1}{K^{\ast }}+\frac{1}{%
  G^{\ast }}\right) \left\langle \sigma _{22}\right\rangle,
  \end{align*}
where $\lran{\cdot}$ represents an average over $C$ or $Y$.
Here $K^*$ denotes the effective bulk modulus, and $G^*$ one of the
  two effective (transverse) shear moduli.

We assume that the possible holes in $C$ are such that there exists both horizontal 
  and vertical line segments contained in $C$, joining opposite faces of $Y$.
Then we can integrate along a horizontal line in the $Y$--cell, and find the
  value $\int \sigma _{22}\dx$.
It is possible to show that this integral is
  independent of $y.$ Similarly, we find, by integrating vertically, that
  $\int \sigma _{11}\dy$ is independent of $x$.
Thus, we find that 
  \begin{align*}
  \lran{ \sjj } & = \frac{1}{l_1}\int \sjj \dx, &
  \lran{ \sii } & = \frac{1}{l_2}\int \sii \dy.
  \end{align*}
Due to the periodicity of the deformed structure,
  $\int \eii \dx = u_{1}(l_{1},y)-u_{1}(0,y)$ and
  $\int \ejj \dy = u_{2}(x,l_{2})-u_{2}(x,0)$, are 
  independent of $x$ and $y$.
Hence, 
\begin{align}\label{assasas}
  \lran{\ejj} & = \frac{1}{l_2} \int \ejj \dy, & 
  \lran{\eii} & = \frac{1}{l_1} \int \eii \dx.
\end{align}
Let us now assume that the average vertical forces is zero, that is
  $\left\langle \sigma _{22}\right\rangle =0$.
Then, integrating along lines without holes we obtain 
  \begin{align*}
  \left\langle \varepsilon _{11}\right\rangle &=\frac{1}{l_{1}}\int
  \varepsilon _{11}\dx=\frac{1}{4}\left( \frac{1}{K}+\frac{1}{G}\right) \frac{1%
  }{l_{1}}\int \sigma _{11}\dx \\
  \left\langle \varepsilon _{22}\right\rangle &=\frac{1}{l_{2}}\int
  \varepsilon _{22}\dy=\frac{1}{4}\left( \frac{1}{K}-\frac{1}{G}\right) \frac{1%
  }{l_{2}}\int \sigma _{11}\dy+\frac{1}{4}\left( \frac{1}{K}+\frac{1}{G}\right) 
  \frac{1}{l_{2}}\int \sigma _{22}\dy.
  \end{align*}
Let $a$ and $b$ be defined by
  \begin{align*}\label{}
  (1+a)\lran{\sii} & = \frac{1}{l_1} \int \sii \dx, &
  b \lran{\sii}    & = \frac{1}{l_2} \int \sjj \dy.
  \end{align*}
Then, by the equations in \re{assasas} we obtain that
  \begin{align*}
  \text{ }\left\langle \varepsilon _{11}\right\rangle &=\frac{1}{4}\left( 
  \frac{1}{K}+\frac{1}{G}\right) (1+a)\left\langle \sigma _{11}\right\rangle, \\
  \left\langle \varepsilon _{22}\right\rangle &=\frac{1}{4}\left( \frac{1}{K}-%
  \frac{1}{G}\right) \left\langle \sigma _{11}\right\rangle +\frac{1}{4}\left( 
  \frac{1}{K}+\frac{1}{G}\right) b\left\langle \sigma _{11}\right\rangle.
  \end{align*}
Combined with the reduced average relations,
  \begin{align*}
  \lran{\eii} & = \frac{1}{4}\left( \frac{1}{%
  K^{\ast }}+\frac{1}{G^{\ast }}\right) \left\langle \sigma
  _{11}\right\rangle, &
  \left\langle \varepsilon _{22}\right\rangle & = \frac{1}{4}\left( \frac{1}{%
  K^{\ast }}-\frac{1}{G^{\ast }}\right) \left\langle \sigma
  _{11}\right\rangle,
  \end{align*}
we find that
  \begin{align*}
  \left( \frac{1}{K}+\frac{1}{G}\right) (1+a) &=\frac{1}{K^{\ast }}+\frac{1}{%
  G^{\ast }}, \\
  \left( \frac{1}{K}-\frac{1}{G}\right) +\left( \frac{1}{K}+\frac{1}{G}\right)
  b &=\frac{1}{K^{\ast }}-\frac{1}{G^{\ast }}.
  \end{align*}
Adding and subtacting these two equations yield, respectively,
  \begin{align*}
  \frac{1}{K}+\left( \frac{1}{K}+\frac{1}{G}\right) \frac{a+b}{2} &=\frac{1%
  }{K^{\ast }}, \\
  \frac{1}{G}+\left( \frac{1}{K}+\frac{1}{G}\right) \frac{a-b}{2} &=\frac{1%
  }{G^{\ast }}.
  \end{align*}
In summary, we have found the following representations of the Vigdergauz constants:
\begin{align*}
A_1 & = \frac{\frac{1}{l_1}\int \sigma_{11} \dx + \frac{1}{l_2}\int \sigma_{22} \dy}{2\lran{\sigma_{11}}} - \frac{1}{2}, \\
A_2 & = \frac{\frac{1}{l_1}\int \sigma_{11} \dx - \frac{1}{l_2}\int \sigma_{22} \dy}{2\lran{\sigma_{11}}} - \frac{1}{2}.
\end{align*}

The original proof of the above mentioned relations made use of some properties
  of the Weierstrass zeta-function and the Kolosov-Muskhelishvili potentials.
In this paper we present a proof of the Vigdergauz
  equations for a class of square symmetric structures.
Our proof relies solely on the Newton-Leibniz and the Green formulas,
  and a lemma of Michell type.
The Michell lemma can be seen as a consequence of the theorem of Stokes in the current
  setting.

The rest of this paper is organized as follows.
In the next section we give the precise definition of the class of structures
  that will be studied and set the notation.
In Section~\ref{sec:averagestresses} and~\ref{sec:quasiperiods}, we give
  some representations of the average stress and the quasiperiods for the cell
  problem in the definition of the effective tensor.
These results are used in the proof of the Vigdergauz equations in
  Section~\ref{sec:vigdergauz}.

\begin{figure}[htbp]
\begin{center}
\begin{minipage}[c]{.45\textwidth}
\centering
\includegraphics[trim = 75mm 120mm 75mm 120mm, clip, width=.9\textwidth]{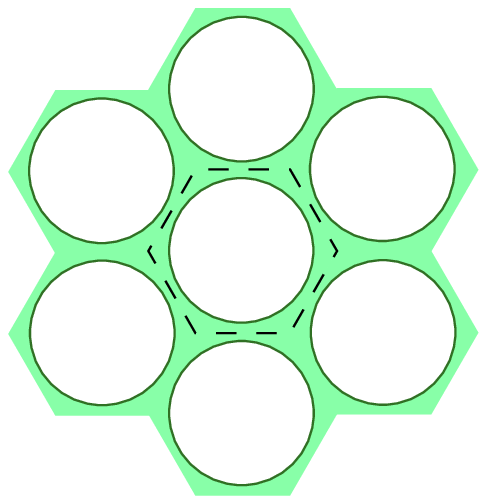}
\end{minipage}
\begin{minipage}[c]{.05\textwidth}
$\,$
\end{minipage}
\begin{minipage}[c]{.45\textwidth}
\centering
\includegraphics[trim = 73mm 120mm 73mm 120mm, clip, width=.9\textwidth]{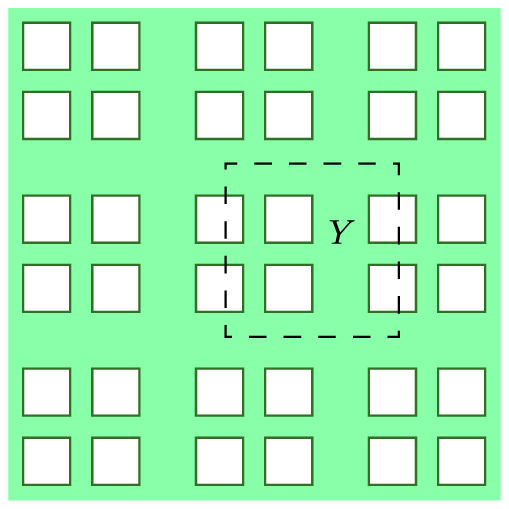}
\end{minipage}
\begin{minipage}[b]{.45\textwidth}
\vspace{.3cm}\centering(a)
\end{minipage}
\begin{minipage}[c]{.05\textwidth}
$\,$
\end{minipage}
\begin{minipage}[b]{.45\textwidth}
\vspace{.3cm}\centering(b)
\end{minipage}
\caption{Two periodic structures (a) and (b) with square symmetric effective elasticity tensors. Periodicity cells are marked with dashed polygons.}
\label{fig:structures}
\end{center}
\end{figure}


\as{Setting of the problem}

We will consider a periodic structure in the planar linear theory of
  elasticity.
Let $\Omega$ denote a period of the structure.
The associated elasticity tensor is assumed to be homogeneous and isotropic.
The local constitutive relation between the stress $\sigma$ and the strain
  $\e$ will be the standard Hooke law which we write using matrix notation as
  \begin{align}
  \mat{\sii\\\sjj\\\sij} & =
  \mat{K + G & K - G & 0 \\ K - G & K + G & 0 \\ 0 & 0 & G}\!\!\!
  \mat{\eii\\\ejj\\2\eij}.
  \label{eq:hookeslaw}
  \end{align}
The two parameters $K$ and $G$ above are the planar bulk modulus and the
  shear modulus, which here are assumed to be positive real numbers so that the matrix
  is positive definite.
Some additional assumptions will be made on the domain $\Omega$.

For a displacement field $u = (u_1,u_2)$ defined on $\Omega$, with values in
  $\rr^2$, the gradient will be decomposed into its symmetric and
  antisymmetric parts:
  \begin{align*}
  \nabla u & = \e(u) + \omega(u).
  \end{align*}
Explicitly, the components of these are
  \begin{align*}
  \eIJ(u) & = \Inv{2}\lrp{\ddt{u_i}{x_j} + \ddt{u_j}{x_i}}, & 
  \wIJ(u) & = \Inv{2}\lrp{\ddt{u_i}{x_j} - \ddt{u_j}{x_i}}.
  \end{align*}
The stress $\sigma(u)$ is then defined by equation~\re{eq:hookeslaw}.
When the displacement field is clear from the context, it will be dropped
  from the strain, the rotation, and the stress, writing just $\e$, $\omega$,
  and $\sigma$.

The domain $\Omega$ is assumed to be a bounded and connected open
  set in $\rr^2$ with Lipschitz continuous boundary.
Moreover, we assume that there correspond periods $l_i > 0$ such that the
  periodic extension $\Lambda$ of $\Omega$,
  $$
  \Lambda = \mop{Int}\mop{Cl}\bigcup_{k \in \zz^2} ( (l_1k_1, l_2k_2) + \Omega),
  $$
  is also connected and open with Lipschitz continuous boundary.
The translate of the cell $(0,l_1) \times (0,l_2)$ that contains $\Omega$ will be
  denoted by $Y$.

Apart from the above standard hypotheses on the periodicity cell, we will
  restrict our study to the following special type of structure.
We assume that there exist two line segments contained in $\Omega$ that
  connects the opposite faces of $Y$. 
This means that some translates $\gamma_1$ and $\gamma_2$ of
  $(0,l_1) \times \set{\alpha}$ and $\set{\beta} \times (0,l_2)$,
  respectively, are subsets of $\Omega$ for some $\alpha$ and $\beta$.
Illustrations of a perforated cell $\Omega$ inside a cell $Y$ with
  line segments $\gamma_1$ and $\gamma_2$ are shown in
  Figure~\ref{fig:structures}(b) and~\ref{fig:cells}(a).


\begin{figure}[htbp]
\begin{center}
\begin{minipage}[c]{.45\textwidth}
\centering
\includegraphics[trim = 73mm 120mm 73mm 120mm, clip, width=.9\textwidth]{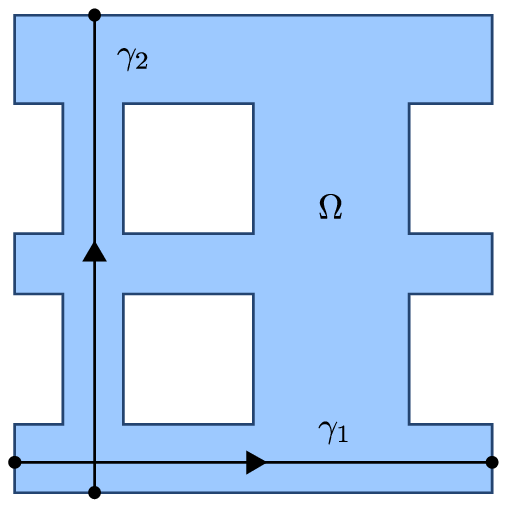}
\end{minipage}
\begin{minipage}[c]{.05\textwidth}
$\,$
\end{minipage}
\begin{minipage}[c]{.45\textwidth}
\centering
\includegraphics[trim = 73mm 120mm 73mm 120mm, clip, width=.9\textwidth]{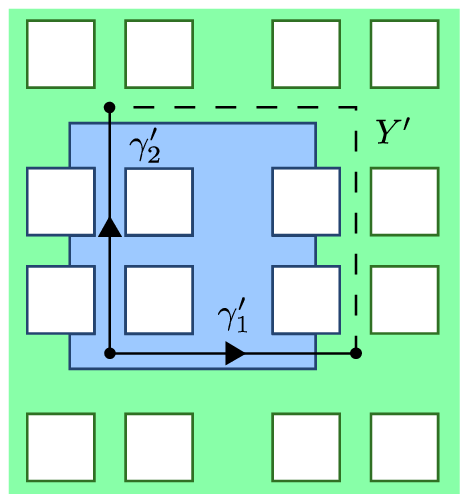}
\end{minipage}
\begin{minipage}[b]{.45\textwidth}
\vspace{.3cm}\centering(a)
\end{minipage}
\begin{minipage}[c]{.05\textwidth}
$\,$
\end{minipage}
\begin{minipage}[b]{.45\textwidth}
\vspace{.3cm}\centering(b)
\end{minipage}
\caption{A cell (a) and its extension together with line segments (b). The
  periodicity cell $\Omega$ in (a) corresponds to the dashed
  rectangle $Y$ in Figure~\ref{fig:structures}(b). The line segments marked
  with arrows in (a) are examples of $\gamma_1$ and $\gamma_2$.
  In (b) the cell shown in (a) is marked as a colored region and the dashed
  rectangle shows the translated cell $Y'$. The line segments marked
  with arrows in (b) are the translates $\gamma_1'$ and $\gamma_2'$ of
  $\gamma_1$ and $\gamma_2$, respectively, from (a) as defined in the proof
  of Lemma~\ref{lm:stresslemma}.}
\label{fig:cells}
\end{center}
\end{figure}

We denote by $H^1_{\mop{per}}(\Omega)$ the closed subspace of $H^1(\Omega)$
  of elements with equal traces on the opposite faces of
  $\partial \Omega \cap \partial Y$.
Let $S$ denote the set of symmetric $2\times 2$ matrices with real entries.
We say that $u \in H^1(\Omega)^2$ is quasiperiodic if
  $u - \xi x \in H^1_{\mop{per}}(\Omega)^2$ for some $\xi \in S$, and $\xi$
  is here called a quasiperiod.

Let $f$ be a function defined on some bounded domain $X$.
The average value of $f$ will be denoted by
  $\di_X f \dx = |X|\inv \int_X f \dx$.  
Any $f$ on $X$ will be considered as a function on $\rr^2$ by periodic
  extension followed by extension by zero.
The relevant example of usage is with $g$ defined on $\Omega$, we will write
  $\di_Y g \dx$ for $|Y|\inv \int_\Omega g \dx$.

The effective elasticity tensor is assumed to be square symmetric and
  positive definite.
This means additional restrictions on $\Omega$ and which can be explicitly stated as
  follows.
The effective tensor, described with parameters $K^*$, $G^*$, and $G^*_{45}$,
  defined for $\xi \in S$ by the equation
  \begin{align}
  \mat{ \di_Y \sii \dx \\ \di_Y \sjj \dx \\ \di_Y \sij \dx } & = 
  \mat{ K^* + G^* & K^* - G^* & 0 \\ K^* - G^* & K^* + G^* & 0 \\ 0 & 0 & G^*_{45} }\!\!\!
  \mat{ \xii \\ \xjj \\ 2\xij },
  \label{eq:effectiverelation}
  \end{align}
  where $\sigma$ is the unique stress obtained from
  equation~\re{eq:hookeslaw} with
  $u - \xi x \in H^1_{\mop{per}}(\Omega)^2$ being a minimizer of the elastic
  energy $\int_\Omega \e(u) \!\cdot\! \sigma(u) \dx$.
The assumption of positive definiteness is equivalent to
  $K^*, \, G^*, \, G^*_{45} > 0$, which are the scaled eigenvalues of the 
  matrix on the right hand side in 
  equation~\re{eq:effectiverelation}.
The matrix is in particular nonsingular.

The existence of a solution $u$ to the above minimization problem is a direct consequence of the Riesz representation theorem and the Korn inequality.
The displacement field $u$ is by periodicity unique up to translation, which shows that the corresponding stress $\sigma$ is unique. 
Moreover, since the data are smooth, $u$ is smooth in some neighborhoods of $\gamma_1$ and $\gamma_2$.
A proof of the regularity of $u$ can be found in~\cite{fichera1972existence} and we refer for contreteness also to~\cite[Chapter~6]{oleinik}.

The relation between the local parameter $K$ and $G$, and the components $c_{ijkl}$ of the elasticity
  compliance\footnote{The inverse of the elasticity tensor.} tensor in the standard notation is given by
  \begin{align*}
  c_{1111} & = c_{2222} = \Inv{4}\kg, &
  c_{1122} & = \Inv{4}\gk, &
  c_{1212} & = \Inv{4G},
  \end{align*}
  where the rest of the components are zero, up to the standard symmetry\footnote{For all indices, $c_{ijkl} = c_{jikl} = c_{ijlk}$.}.
The parameters in relation~\re{eq:effectiverelation} are called the effective planar bulk modulus $K^*$,
  and the effective shear moduli $G^*$ and $G^*_{45}$. 
The relation to the effective compliance tensor is, similarly to the above description, given by
  \begin{align*}
  c^*_{1111} & = c^*_{2222} = \Inv{4}\kgs, &
  c^*_{1122} & = \Inv{4}\gks, &
  c^*_{1212} & = \Inv{4G^*_{45}}.
  \end{align*}
The rest of the components are determined by symmetry as in the local relation.

For a stress field $\sigma$, 
the divergence and the trace will be denoted as follows:
  \begin{align*}
  \mop{div}v & = \lrp{
  \ddt{\sigma_{11}}{x_1} + \ddt{\sigma_{12}}{x_2},
  \ddt{\sigma_{12}}{x_1} + \ddt{\sigma_{22}}{x_2}
  }, &
  \mop{Tr}\sigma & = \sigma_{11} + \sigma_{22}.
  \end{align*}
The outward unit normal of at a Lipschitz continuous boundary will be denoted by $\nu$,
  and the normal traction of the stress by $\sigma\nu$.
The stress and the strain are sometimes represented by the vectors
  $\sigma = (\sii,\sjj,\sij)$ and $\e = (\eii,\ejj,2\eij)$, as in
  equation~\re{eq:hookeslaw} and in a sense also in
  equation~\re{eq:effectiverelation}. The product is then the
  standard inner product on $\rr^3$, which coincides with the Frobenius inner
  product of the tensor fields. The relavant example here is the elastic
  energy above.

\as{Representation of average stress}\label{sec:averagestresses}

In this section we give a representation of the average stress over the
  periodicity cell in terms of the average stress over line segments in the
  definition of the effective elasticity tensor. 
See Figure~\ref{fig:cells}(a) for an illustration of the line segments
  $\gamma_1$ and $\gamma_2$. In Figure~\ref{fig:cells}(b), the other
  line segments used in the proof of the lemma below are illustrated.

\begin{lm}\label{lm:stresslemma}
Let $u$ be a quasiperiodic minimizer of $\int_\Omega \e(u)\!\cdot\!\sigma(u)\dx$.
Then
  \begin{align*}
  \di_Y \sii \dx & = \di_{\gamma_2} \sii \dx,\\ 
  \di_Y \sjj \dx & = \di_{\gamma_1} \sjj \dx,\\
  \di_Y \sij \dx & = \di_{\gamma_2} \sij \dx = \di_{\gamma_1} \sij \dx.
  \end{align*}
  \end{lm}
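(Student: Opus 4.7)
My strategy is to use the Euler--Lagrange conditions satisfied by $u$ together with the divergence theorem on the Lipschitz domain $\Omega$ and a Fubini-type reduction afforded by $Y$-periodicity. Testing the energy $\int_\Omega \e(u)\!\cdot\!\sigma(u)\dx$ against $H^1_{\mop{per}}(\Omega)^2$ variations yields, in strong form, $\mop{div}\sigma=0$ in $\Omega$, zero traction $\sigma\nu=0$ on the portion of $\partial\Omega$ strictly inside $Y$ (the hole boundaries), and $Y$-periodic traces on $\partial\Omega\cap\partial Y$. Since $u$ is smooth in neighborhoods of $\gamma_1$ and $\gamma_2$, the one-dimensional line integrals in the statement are meaningful pointwise.

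For the identity $\di_Y\sii\dx=\di_{\gamma_2}\sii\ds$ I would first translate the periodicity cell so that $\gamma_2$ sits on the left face of the shifted cell $Y'$ (the situation illustrated in Figure~\ref{fig:cells}(b)); this leaves all averages unchanged. The key algebraic identity is
\begin{equation*}
\sii = \partial_1(x_1\sii)+\partial_2(x_1\sij),
\end{equation*}
an immediate consequence of the first equilibrium equation $\partial_1\sii+\partial_2\sij=0$. Applying Green's formula on the translated cell $\Omega'$ therefore gives
\begin{equation*}
\int_{\Omega'}\sii\dx=\int_{\partial\Omega'}x_1(\sigma\nu)_1\ds.
\end{equation*}
The right-hand side splits into four pieces: on the hole boundaries $(\sigma\nu)=0$ kills the integrand; on the left face $\{x_1=0\}$ the factor $x_1$ kills it; on the top and bottom faces $(\sigma\nu)_1=\pm\sij$, and these contributions cancel in pairs by $Y$-periodicity of $\sij$; and on the right face $\{x_1=l_1\}$ the contribution is $l_1\int \sii\,dx_2$. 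Periodicity identifies the trace of $\sii$ on the right face with its trace on the left face, which coincides with the values along $\gamma_2$, and dividing by $|Y|=l_1 l_2$ produces the desired equality.

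The remaining identities follow by the same scheme with auxiliary vector fields chosen so that equilibrium reduces the divergence to the target component. Concretely, $\sjj=\partial_1(x_2\sij)+\partial_2(x_2\sjj)$ gives $\di_Y\sjj\dx=\di_{\gamma_1}\sjj\ds$ after translating $\gamma_1$ onto the bottom face; $\sij=\partial_1(x_1\sij)+\partial_2(x_1\sjj)$ reduces $\di_Y\sij\dx$ to the $\gamma_2$ average; and $\sij=\partial_1(x_2\sii)+\partial_2(x_2\sij)$ reduces the same quantity to the $\gamma_1$ average. In every case the only nonzero face contribution comes from the face opposite the one carrying the relevant segment, and $Y$-periodicity identifies that face with the segment itself.

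The main obstacle is a careful bookkeeping one: checking that Green's formula applies and that the boundary splittings---vanishing traction on the holes, and cancellation on opposite faces of $Y'$---hold in the correct $L^2$-trace sense on the Lipschitz boundary of $\Omega$. Both follow from the variational Euler--Lagrange conditions for $u$ and the definition of $H^1_{\mop{per}}(\Omega)$, and it is precisely the elementary nature of this step---just Green's formula applied to the auxiliary fields above---that realizes the ``elementary proof'' advertised in the introduction.
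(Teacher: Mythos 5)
Your proof is correct and follows essentially the same route as the paper: your divergence identities, e.g.\ $\sii=\partial_1(x_1\sii)+\partial_2(x_1\sij)$, are precisely the Green formula applied with the affine test fields $(x_1+\eta,0)$, $(0,x_2+\eta)$, $(x_2+\eta,0)$, $(0,x_1+\eta)$ on the translated cell $Y'\cap\Lambda$, which is exactly what the paper does, with the same boundary bookkeeping (zero traction on the hole boundaries, vanishing of the weight on the face carrying the segment, cancellation of the opposite faces by periodicity, and translation invariance of full-period line integrals). The only difference is cosmetic: you translate the cell so the segment lies on a coordinate face instead of carrying the constant $\eta$, and you justify the $L^2$ trace pairing a little more briefly than the paper's argument via $\mop{dist}(\partial Y',\partial\Lambda)>0$ and interior regularity.
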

\begin{proof}
Since $u$ minimizes the elastic energy, the stress $\sigma = \sigma(u)$
  satisfies the equilibrium equation $\mop{div}\sigma = 0$ in the sense of
  distributions with vanishing normal stress $\sigma \nu$ at the boundaries
  of any possible hole in the global structure.
The quasiperiodicity of $u$ implies the periodicity of $\e$ and hence of
  $\sigma$.
We therefore have
  \begin{align*}
  \mop{div}\sigma & = 0 \text{ in $\Lambda$},\\
  \sigma \nu & = 0 \text{ on $\partial \Lambda$}.
  \end{align*}

Let $\gamma_1'$ be the translate of $\gamma_1$ such that the left endpoint of
  $\gamma_1'$ is the common point of $\gamma_1$ and $\gamma_2$.
Let $\gamma_2'$ be the translate of $\gamma_2$ such that its lower endpoint
  is the left endpoint of $\gamma_1'$.
Let $Y'$ be a translate of $Y$ such that $\gamma_1'$ and $\gamma_2'$ are the
  lower and left sides of the boundary of $Y' \cap \Lambda$, respectively.
Let $\Omega' = Y' \cap \Lambda$.

Since $\sigma$ and $\mop{div}\sigma$ have components in $L^2(\Omega')$, 
  the following Green formula holds for all $\varphi \in H^1(\Omega')^2$:
  \begin{align*}
  \int_{\Omega'} \sigma \cdot \nabla \varphi \dx
  +
  \int_{\Omega'} \mop{div}\sigma \cdot \varphi \dx
  =
  \lran{\sigma \nu , \varphi}, 
  \end{align*}
  where $\lran{\cdot,\cdot}$ denotes the pairing of
  $H^{1/2}(\partial \Omega')^2$ and its continuous dual.
Since $\mop{dist}(\partial Y', \partial \Lambda) > 0$ and
  $\sigma\nu = 0$ on $\partial \Lambda$ it follows from the regularity
  of $u$ that $\sigma \nu \in L^2(\partial \Omega')^2$.
  Hence
  $$
  \lran{\sigma \nu, \varphi} = \int_{\partial \Omega'} \sigma \nu \cdot \varphi \dx
  = \int_{\partial Y'} \sigma \nu \cdot \varphi \dx.
  $$

Let $\eta \in \rr$ be such that $\varphi = (x_1 + \eta, 0)$ vanishes on
  $\gamma_2'$.
Then by periodicity and the Green formula,
  \begin{align*}
  \int_{\Omega} \sii \dx & = \int_{\Omega'} \sii \dx = \int_{ \partial Y'} \sigma\nu \cdot (l_1, 0) \dx  
  = l_1 \int_{\gamma_2'} \sii \dx = l_1 \int_{\gamma_2} \sii \dx.
  \end{align*}
A division by $|Y|$ yields the first equation in the statement of the lemma.

The other three equations follow by the same argument when using test functions of
  the types $(0, x_2 + \eta)$, $(x_2 + \eta, 0)$, and $(0, x_1 + \eta)$,
  respectively, for suitable $\eta$. 
\end{proof}

In the proof of the above lemma used the Green formula which
  relies on the existence of a normal trace operator for
  elements in $L^2(\Omega)^n$ with divergence in $L^2(\Omega)$.
We refer the reader to~\cite[Chapter~2]{girault} (and \cite{lionsmagii}) for its construction.

\as{Representation of quasiperiods}\label{sec:quasiperiods}

In this section we give a representation of the quasiperiods of the
  displacement fields in the definition of the effective elasticity tensor
  in terms the average stress over the line segments and the periodicity
  cell.
The idea of the proof is to write $\nabla = \e - \omega$ and then use the
  Newton-Leibniz formula and that the local elasticity tensor is homogeneous
  by supposition.
Hence it is essentially an application of the Ces\`{a}ro formula.

Let $\gamma_1'$ and $\gamma_2'$ denote the translations of $\gamma_1$ and
  $\gamma_2$, respectively, that were considered in the proof of
  Lemma~\ref{lm:stresslemma}. For illustrations see Figure~\ref{fig:cells}(a)
  and~\ref{fig:cells}(b).

\begin{lm}\label{lm:quasiperiodlemma}
Let $u$ be a quasiperiodic minimizer of $\int_\Omega \e(u)\!\cdot\!\sigma(u) \dx$.
Then the quasiperiod $\xi \in S$ of $u$ satisfies
  \begin{align*}
    \xii & = \frac{1}{4K}\lrp{ \di_{\gamma_1} \sii \dx + \di_{Y} \sjj \dx } + \frac{1}{4G}\lrp{ \di_{\gamma_1} \sii \dx - \di_{Y} \sjj \dx },\\
    \xjj & = \frac{1}{4K}\lrp{ \di_{\gamma_2} \sjj \dx + \di_{Y} \sii \dx } + \frac{1}{4G}\lrp{ \di_{\gamma_2} \sjj \dx - \di_{Y} \sii \dx },\\
    \xij & =  \frac{1}{2G}\di_Y \sij \dx + \frac{1}{8}\kg\lrp{\di_{\gamma_1'} \ddt{\mop{Tr}\sigma}{x_2} x_1 \dx 
    + \di_{\gamma_2'} \ddt{\mop{Tr}\sigma}{x_1} x_2 \dx}.
  \end{align*} 
\end{lm}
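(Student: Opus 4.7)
The plan is to convert each component of the quasiperiod $\xi$ into the average of a derivative of $u$ along one of the transverses $\gamma_1,\gamma_2,\gamma_1',\gamma_2'$, then simplify the integrand using the inverse Hooke law and Lemma~\ref{lm:stresslemma}. Because $u - \xi x \in H^1_{\mop{per}}(\Omega)^2$, one has $u_i(x + l_j e_j) - u_i(x) = \xi_{ij} l_j$ whenever the connecting segment lies in $\Lambda$.

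For $\xii$, Newton--Leibniz along $\gamma_1$ gives $\xii = \di_{\gamma_1}\eii\, dx_1$. Substituting $\eii = \frac{1}{4}\kg\sii + \frac{1}{4}\gk\sjj$ (the homogeneous $K,G$ come outside the line integral), then using Lemma~\ref{lm:stresslemma} to rewrite $\di_{\gamma_1}\sjj\, dx_1$ as $\di_Y\sjj\, dx$, yields the stated formula after regrouping $\frac{1}{4}\kg$ and $\frac{1}{4}\gk$ into the $\frac{1}{4K}$ and $\frac{1}{4G}$ blocks. The identity for $\xjj$ is proved identically using $\gamma_2$ and $\ejj$.

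The off-diagonal entry uses both segments. Writing $\partial u_1/\partial x_2 = \eij + \wij$ and $\partial u_2/\partial x_1 = \eij - \wij$, Newton--Leibniz along $\gamma_2'$ and $\gamma_1'$ produces
\begin{align*}
\xij = \di_{\gamma_2'}(\eij + \wij)\, dx_2 = \di_{\gamma_1'}(\eij - \wij)\, dx_1.
\end{align*}
Since $\eij = \sij/(2G)$ and Lemma~\ref{lm:stresslemma} forces the averages of $\sij$ over $\gamma_1'$, $\gamma_2'$, and $Y$ to coincide, the symmetric parts reduce to $\frac{1}{2G}\di_Y\sij\, dx$, so averaging the two expressions leaves
\begin{align*}
\xij = \frac{1}{2G}\di_Y\sij\, dx + \frac{1}{2}\bigl(\di_{\gamma_2'}\wij\, dx_2 - \di_{\gamma_1'}\wij\, dx_1\bigr).
\end{align*}

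The hard step is rewriting the rotation integrals. The key is a Michell-type identity
\begin{align*}
\ddt{\wij}{x_1} = \frac{1}{4}\kg\ddt{\mop{Tr}\sigma}{x_2}, \qquad \ddt{\wij}{x_2} = -\frac{1}{4}\kg\ddt{\mop{Tr}\sigma}{x_1},
\end{align*}
obtained by differentiating $\wij = \frac{1}{2}(\partial u_1/\partial x_2 - \partial u_2/\partial x_1)$, inserting the inverse Hooke law for $\e$, and invoking $\mop{div}\,\sigma = 0$. Placing the common endpoint of $\gamma_1'$ and $\gamma_2'$ at the origin, one integration by parts against $x_1$ on $\gamma_1'$ and against $x_2$ on $\gamma_2'$ converts each rotation integral into a boundary value plus an integral of the form $\di\, x_i\,\partial\mop{Tr}\sigma/\partial x_j$. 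The combined boundary contribution collapses to $\wij(0,l_2) - \wij(l_1,0)$, and this vanishes because symmetry of $\xi$ forces $\omega(u) = \omega(u - \xi x)$ to be $Y$-periodic, so both values equal $\wij(0,0)$. Collecting the factor $\frac{1}{2}\cdot\frac{1}{4}\kg = \frac{1}{8}\kg$ produces the statement. The main obstacle is precisely this last argument: both deriving the Michell-type identity and recognizing that periodicity of $\omega$, a consequence of $\xi\in S$, is exactly what kills the boundary terms from the integration by parts.
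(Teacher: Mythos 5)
Your proposal is correct and follows essentially the same route as the paper: Newton--Leibniz along the transverse segments, the decomposition $\nabla u = \e(u)+\omega(u)$ with the inverse Hooke law and Lemma~\ref{lm:stresslemma} for the diagonal entries, and for $\xij$ an integration by parts of the rotation along $\gamma_1'$ and $\gamma_2'$ combined with the compatibility identity $\nabla\wij = \frac{1}{4}\kg\lrp{\ddt{\mop{Tr}\sigma}{x_2},-\ddt{\mop{Tr}\sigma}{x_1}}$ obtained from Hooke's law and $\mop{div}\sigma=0$. The only cosmetic difference is how the endpoint terms are removed: you invoke periodicity of $\omega$ (a consequence of $\xi\in S$) to identify both boundary values with the value at the common endpoint, whereas the paper averages the two representations and cancels the point values via the antisymmetry of $\omega$ together with the same periodicity.
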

\begin{proof}
Since $u$ is quasiperiodic we have
by the Newton-Leibniz formula and the local elastic relation~\re{eq:hookeslaw},
\begin{align*}
\xii l_1 & = u_1(b) - u_1(a) = \int_{\gamma_1} \ddt{u_1}{x_1} \dx = 
\int_{\gamma_1} \eii \dx \\
 & =
  \Inv{4K}\lrp{ \int_{\gamma_1} \sii \dx + \int_{\gamma_1} \sjj \dx }
+  \Inv{4G}\lrp{ \int_{\gamma_1} \sii \dx - \int_{\gamma_1} \sjj \dx },
\end{align*}
where $a$ and $b$ denote the endpoints of $\gamma_1$.
By Lemma~\ref{lm:stresslemma},
\begin{align*}
\xii l_1 & = 
   \Inv{4K}\lrp{ \int_{\gamma_1} \sii \dx + \Inv{l_2}\int_{Y} \sjj \dx }
+  \Inv{4G}\lrp{ \int_{\gamma_1} \sii \dx - \Inv{l_2}\int_{Y} \sjj \dx }.
\end{align*}
A division by $l_1$ gives the first representation and the second follows by
  an interchange of the indices.

Denote by $a$ the common point of\footnote{The point $a$ is also the common point
  of $\gamma_1'$ and $\gamma_2'$.} $\gamma_1$ and $\gamma_2$.
With $b$ being the other endpoint of $\gamma_1'$, we have by periodicity,
\begin{align*}
\xij l_1 & = u_2(b) - u_2(a) = \int_{\gamma_1'} \ddt{u_2}{x_1} \dx
= \int_{\gamma_1'} \eij \dx  + \int_{\gamma_1'} \wji \dx \\
& = \Inv{2G}\int_{\gamma_1'} \sij \dx  + \int_{\gamma_1'} \wji \dx
 = \Inv{2G}\int_{\gamma_1} \sij \dx  + \int_{\gamma_1'} \wji \dx.
\end{align*}
An integration by parts gives by periodicity,
\begin{align*}
\int_{\gamma_1'} \wji \dx & =
l_1 \wji(a) - \int_{\gamma_1'} \ddt{\wji}{x_1}(x_1 - b_1) \dx = 
l_1 \wji(a) - \int_{\gamma_1'} \ddt{\wji}{x_1}x_1 \dx.
\end{align*}
By the local relation~\re{eq:hookeslaw},
\begin{align*}
\int_{\gamma_1'} \ddt{\wji}{x_1}x_1 \dx & =
\int_{\gamma_1'} \lrpb{ \ddt{\eij}{x_1} - \ddt{\eii}{x_2} } x_1 \dx \\
& = \Inv{2G}\int_{\gamma_1'} \ddt{\sij}{x_1} x_1 \dx
  - \Inv{4}\kg \int_{\gamma_1'} \ddt{\sii}{x_1} x_2 \dx \\
& \quad  - \Inv{4}\gk \int_{\gamma_1'} \ddt{\sjj}{x_2} x_1 \dx.
\end{align*}
A division by $l_1$ yields
  \begin{align*}
  \xij & = \wji(a)
  + \Inv{2G}\dint_{\gamma_1} \sij \dx 
  - \Inv{2G}\dint_{\gamma_1'} \ddt{\sij}{x_1} x_1 \dx \\
  & \quad + \Inv{4}\kg \dint_{\gamma_1'} \ddt{\sii}{x_2} x_1 \dx 
  + \Inv{4}\gk \dint_{\gamma_1'} \ddt{\sjj}{x_2} x_1 \dx \\
  & = \wji(a) + \Inv{2G}\di_Y \sij \dx + \Inv{4} \kg \dint_{\gamma_1'} \ddt{\mop{Tr}\sigma}{x_2} x_1 \dx,
  \end{align*}
  where we in the last step used Lemma~\ref{lm:stresslemma} and that
  $\mop{div}\sigma = 0$.
By periodicity $\wij(a) = \wij(b)$ and thus we get by an interchange of the indices,
  \begin{align*}
  \xij & = \wij(a) + \Inv{2G}\di_Y \sij \dx + \Inv{4} \kg \dint_{\gamma_2'} \ddt{\mop{Tr}\sigma}{x_1} x_2 \dx.
  \end{align*}
By the antisymmetry of $\omega$, taking the arithmetic mean of these two
  representations of $\xij$ completes the proof.
\end{proof}

\as{The Vigdergauz equations}\label{sec:vigdergauz}

In this section we give a proof of the Vigdergauz equations for the considered
  class of structures.
The proof is a straightforward application of the representation lemmas of
  the previous sections to the definition of the effective elasticity tensor.
The independence of the local elastic properties $K$ and $G$ follow from
  the Michell lemma.
We briefly recall the requirements on the periodicity cell.
The cell $\Omega$ is assumed to be such that the effective elasticity tensor
  is positive definite and has square symmetry so that it can be
  described by three parameters $K^*$, $G^*$, and $G^*_{45}$.
Moreover, it is assumed that there exist line segments in $\Omega$,
  as in Figure~\ref{fig:cells}(a),
  connecting the opposite sides without touching the boundary of any possible hole
  in the periodic extension $\Lambda$ of $\Omega$.

The Michell lemma states that for the pure traction problem\footnote{The
  Neumann problem in which only the normal traction $\sigma \nu$ is specified
  on the boundary and no external body forces are present.
In addition, the total force on each hole is required to vanish.} in $\rr^2$ with
  homogeneous and isotropic elasticity tensor on a bounded Lipschitz domain,
  the stress field is independent of the values of the components of the
  elasticity tensor.
A reference is given in the end of this section.

\begin{tm}\label{tm:vigdergauz}
There exist nonnegative real numbers $A_i$ depending only on $\Omega$ such
  that
  \begin{align*}
  \Inv{K^*} & = \Inv{K} + A_1 \! \kg,\\
  \Inv{G^*} & = \Inv{G} + A_2 \! \kg,\\
  \Inv{G^*_{45}} & = \Inv{G} + A_3 \! \kg.
  \end{align*}
\end{tm}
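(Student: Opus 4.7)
The plan is to evaluate the effective relation~\re{eq:effectiverelation} for three specific choices of the quasiperiod $\xi$, combine the resulting expressions for $\xii$, $\xjj$, and $\xij$ with Lemma~\ref{lm:quasiperiodlemma}, and then invoke the Michell lemma to show that the geometric coefficients that appear are independent of $K$ and $G$.

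For the first two identities I would fix $\xi$ so that the induced cell averages satisfy $\di_Y \sjj \dx = \di_Y \sij \dx = 0$ while $\di_Y \sii \dx \neq 0$; such a $\xi$ exists by the invertibility of the effective tensor. By~\re{eq:effectiverelation}, $\xii = \tfrac{1}{4}\kgs \di_Y \sii \dx$ and $\xjj = \tfrac{1}{4}\gks \di_Y \sii \dx$. Substituting $\di_Y \sjj \dx = 0$ into the first two formulas of Lemma~\ref{lm:quasiperiodlemma} gives two alternate expressions for the same $\xii$ and $\xjj$, now in terms of $1/K$, $1/G$, the cell average $\di_Y \sii \dx$, and the line-segment averages $\di_{\gamma_1}\sii \dx$ and $\di_{\gamma_2}\sjj \dx$. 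Matching the two pairs and taking their sum and difference isolates $1/K^*$ and $1/G^*$ in the asserted form, with
\begin{align*}
A_1 & = \tfrac{1}{2}\lrp{\tfrac{\di_{\gamma_1}\sii \dx + \di_{\gamma_2}\sjj \dx}{\di_Y \sii \dx} - 1}, & A_2 & = \tfrac{1}{2}\lrp{\tfrac{\di_{\gamma_1}\sii \dx - \di_{\gamma_2}\sjj \dx}{\di_Y \sii \dx} - 1},
\end{align*}
thereby recovering the heuristic formulas of the introduction.

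For the third identity I would take $\xi$ producing $\di_Y \sii \dx = \di_Y \sjj \dx = 0$ and $\di_Y \sij \dx \neq 0$, giving $\xii = \xjj = 0$ and $\xij = (2G^*_{45})^{-1}\di_Y \sij \dx$ by~\re{eq:effectiverelation}. Matching with the third formula of Lemma~\ref{lm:quasiperiodlemma} and dividing by $\di_Y \sij \dx$ then yields $1/G^*_{45} = 1/G + A_3 \kg$ with
\begin{align*}
A_3 & = \tfrac{1}{4 \di_Y \sij \dx}\lrp{\di_{\gamma_1'}\ddt{\mop{Tr}\sigma}{x_2} x_1 \dx + \di_{\gamma_2'}\ddt{\mop{Tr}\sigma}{x_1} x_2 \dx}.
\end{align*}

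The main obstacle is to verify that $A_1, A_2, A_3$ depend only on the geometry of $\Omega$ and not on $K$ and $G$. In each of the three auxiliary problems the stress $\sigma$ is uniquely determined by $\mop{div}\sigma = 0$ in $\Lambda$, $\sigma\nu = 0$ on any hole boundary, periodicity, and the prescribed cell averages; this is precisely a pure traction problem in the sense of the Michell lemma, and its stress field is independent of $K$ and $G$. The quotients defining $A_i$ are thus functions of $\Omega$ alone. Finally, the nonnegativity $A_i \geq 0$ follows from $\kg > 0$ together with the comparisons $K^* \leq K$, $G^* \leq G$, $G^*_{45} \leq G$, which are obtained as Voigt upper bounds by testing the energy minimization in~\re{eq:effectiverelation} against the affine displacement $u = \xi x$.
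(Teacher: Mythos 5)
Your derivation of the separation formulas is correct and follows the paper's route in all essentials: existence of the auxiliary stress states via invertibility of the effective tensor, Lemma~\ref{lm:quasiperiodlemma} matched against the inverted form of~\re{eq:effectiverelation}, and the Michell lemma to strip out the dependence on $K$ and $G$. The only (harmless) variation in this part is that for $1/K^*$ and $1/G^*$ you use a single loading with $\di_Y \sjj \dx = \di_Y \sij \dx = 0$ and exploit both the $\xii$ and $\xjj$ formulas of Lemma~\ref{lm:quasiperiodlemma}, whereas the paper uses two loadings (uniaxial and hydrostatic) and only the $\xii$ formula; your choice has the small advantage of reproducing the introduction's heuristic expressions for $A_1,A_2$ verbatim. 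Where you genuinely diverge is the nonnegativity of the $A_i$: the paper deduces it from the assumed positive definiteness of the effective tensor by writing the eigenvalues of the effective compliance matrix as $\lambda_1=\frac{(K+G)A_1+G}{2KG}$, etc., and using that these must be positive for \emph{every} $K,G>0$; you instead invoke the Voigt comparisons $K^*\le K$, $G^*\le G$, $G^*_{45}\le G$ (in the spirit of the paper's remark on Hashin--Shtrikman, of which Voigt is a weaker but sufficient version). That route is fine, but note it needs one ingredient the paper never establishes, because the effective tensor is defined there through average stresses rather than through energy: to get $\xi\cdot C^*\xi$ from the minimal energy (and hence the Voigt inequality from the trial field $u=\xi x$) you must prove the Hill-type identity $\di_Y \sigma\cdot\e(u)\dx = \lrp{\di_Y\sigma\dx}\cdot\xi$, using that $\sigma$ is divergence free, has vanishing traction on hole boundaries, and that $u-\xi x$ is periodic. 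This follows from the same Green formula used in the proof of Lemma~\ref{lm:stresslemma}, so it is a short but necessary addition to make your nonnegativity argument complete; the paper's eigenvalue argument avoids it at the cost of being slightly less transparent physically.
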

\begin{proof}
Let $K, G > 0$ be given.
Let $\eta \in S$.
Then by the nonsingularity of the effective tensor,
  there exists a quasiperiodic minimizer of
  $\int_\Omega \e(u)\!\cdot\!\sigma(u)\dx$
  with unique quasiperiod $\xi \in S$ and 
  with average stress, defined by the left hand side of
  equation~\re{eq:effectiverelation}, which is equal to $\eta$
  represented by $(\eta_{11},\eta_{22},\eta_{12})$.
  The displacement field $u$ is unique up to translation.
  Therefore the stress $\sigma$ is unique.
  Thus $\sigma$ is the unique solution to the corresponding
  pure traction problem.
  Hence by the Michell lemma, the stress $\sigma$ is independent 
  of both $K$ and $G$.
We proceed in three steps.

Choose $\sigma$, by specifying $\eta$, such that
  \begin{align*}
  \dint_Y \sii \dx & \neq 0, &
  \dint_Y \sjj \dx & = \dint_Y \sij \dx = 0.
  \end{align*}
By equation~\re{eq:effectiverelation} and Lemma~\ref{lm:quasiperiodlemma},
  \begin{align}
  \Inv{K^*} + \Inv{G^*} = \frac{4\xii}{\dint_Y \sii \dx} = \frac{\dint_{\gamma_1}\sii \dx}{\dint_Y \sii \dx}\kg.
  \label{eq:aux1}
  \end{align}

Choose $\sigma$ such that
  \begin{align*}
  \dint_Y \sii \dx & = \dint_Y \sjj \dx \neq 0, &
  \dint_Y \sij \dx & = 0.
  \end{align*}
Then by equation~\re{eq:effectiverelation} and Lemma~\ref{lm:quasiperiodlemma},
  \begin{align}
  \Inv{K^*} = \frac{2\xii}{\dint_Y \sii \dx} = \Inv{K} + \Inv{2} \! \lrp{ \frac{\dint_{\gamma_1} \sii \dx}{\dint_Y \sii \dx} - 1 } \! \kg.
  \label{eq:aux2}
  \end{align}
From the equations~\re{eq:aux1} and~\re{eq:aux2}, the two first equations in
  the statement of the lemma follow, since the stress is independent of the local
  elastic properties.

Choose $\sigma$ such that
  \begin{align*}
  \dint_Y \sii \dx & = \dint_Y \sjj \dx = 0, &
  \dint_Y \sij \dx & \neq 0.
  \end{align*}
From equation~\re{eq:effectiverelation} and Lemma~\ref{lm:quasiperiodlemma}
  it follows that
  \begin{align}
  \Inv{G^*_{45}} = \Inv{G} + 
  \frac{ \dint_{\gamma_1'} \ddt{\mop{Tr}\sigma}{x_2} x_1 \dx + \dint_{\gamma_2'} \ddt{\mop{Tr}\sigma}{x_1} x_2 \dx }{\dint_Y \sij \dx}\kg,
  \label{eq:aux3}
  \end{align}
  which completes the proof of the separation of variables.

Left to show is the nonnegativity of the geometric constants.
This follows directly from the assumption of positive definite effective elasticity
tensor. For the matrix is symmetric and thus has strictly positive real eigenvalues,
so the same holds for its inverse.
Using the equations \re{eq:aux1}--\re{eq:aux3} to write the components of the effective compliance matrix in terms of $A_1, \, A_2, \, A_3$, and $K, \, G$, we find that
the eigenvalues $\lambda_i$ are 
\begin{align*}\label{}
  \lambda_1 & = \frac{(K+G)A_1 + G}{2KG}, &
  \lambda_2 & = \frac{(K+G)A_2 + K}{2KG}, &
  \lambda_3 & = \frac{(K+G)A_3 + K}{KG},
\end{align*}
for any choice of $K,\,G > 0$. 
Hence $A_1, \, A_2, \, A_3 \ge 0$.
\end{proof}

\begin{rem}
There are natural alternatives to letting $\sigma$ be such that
  its average value over the periodicity cell is some scaled element of
  the canonical basis of $\rr^3$ as in the proof of Theorem~\ref{tm:vigdergauz}.
For example, one can choose $\sigma$ such that
  the average stress vector varies over eigenvectors of the effective
  elasticity matrix in equation~\re{eq:effectiverelation},
  which are clearly also the eigenvectors of the compliance matrix.
In this way the deformations are connected to the physical intuition 
  of what loads the bulk and shear moduli are supporting
  and in that sense arrive more directly to the Vigdergauz equations as
  examplified in~\cite{vigder}. 
However, the proof given above would not be simplified in any way by making such
  modification.
\end{rem}

\begin{rem}\label{}
The nonnegativity of the geometric constants $A_1, \, A_2, \, A_3$
in Theorem~\ref{tm:vigdergauz} can also be seen as a direct consequence of the
Hashin-Shtrikman bounds, which for 
  square symmetric tensors can be written
  \begin{align*}
  \Inv{K^*} & \ge \Inv{K} + \frac{1 - \rho}{\rho}\kg,\\
  \Inv{G^*} & \ge \Inv{G} + \frac{1 - \rho}{\rho}\kg,\\
  \Inv{G^*_{45}} & \ge \Inv{G} + \frac{1 - \rho}{\rho}\kg,
  \end{align*}
  where $\rho$ denotes the positive volume fraction $\rho = |\Omega|/|Y| \le 1$.
\end{rem}

A proof of the Michell lemma for smooth domains can be found in~\cite[Chapter~5]{musk}.
For Lipschitz domains, the theorem of Stokes can be used.
We refer the reader to~\cite[Chapter~2]{girault} for an appropriate
  version of the theorem, which should be applied to the simply connected
  components of a decomposition of $\Omega$.
The Hashin-Shtrikman bounds in terms of $K^*$, $G^*$, and $G^*_{45}$,
  that were used in the above proof, can be found in~\cite{bergren2}.

Remark that if some lines at angles $\pi/4$ and $3\pi/4$ transverse the
  corresponding periodicity cell without touching the boundary of any possible hole,
  the proof for $G^*_{45}$ simplifies to that of $G^*$.
This is because by the hypothesis of the symmetry of the effective tensor,
  $G^*$ and $G^*_{45}$ interchange roles
  upon a rotation of the coordinate system by $\pi/4$.
The structure and cell illustrated in Figure~\ref{fig:structures}(b) constitute an
  example of a structure for which no such lines exist.

\bibliographystyle{plain}
\bibliography{refs}

\def\cprime{$'$} \def\cprime{$'$} \def\cprime{$'$}
\begin{thebibliography}{1}

\bibitem{bergren2}
S.~A. Berggren, D.~Lukkassen, A.~Meidell, and L.~Simula.
\newblock Some methods for calculating stiffness properties of periodic
  structures.
\newblock {\em Appl. Math.}, 48(2):97--110, 2003.

\bibitem{fichera1972existence}
G.~Fichera.
\newblock {Existence theorems in elasticity}.
\newblock {\em Handbuch der Physik}, VIa/2:347--424, 1972.

\bibitem{girault}
V.~Girault and P.-A. Raviart.
\newblock {\em Finite element methods for {N}avier-{S}tokes equations},
  volume~5 of {\em Springer Series in Computational Mathematics}.
\newblock Springer-Verlag, Berlin, 1986.
\newblock Theory and algorithms.

\bibitem{lionsmagii}
J.-L. Lions and E.~Magenes.
\newblock {\em Non-homogeneous boundary value problems and applications. {V}ol.
  {II}}.
\newblock Springer-Verlag, New York, 1972.
\newblock Translated from the French by P. Kenneth, Die Grundlehren der
  mathematischen Wissenschaften, Band 182.

\bibitem{musk}
N.~I. Muskhelishvili.
\newblock {\em Some basic problems of the mathematical theory of elasticity.
  {F}undamental equations, plane theory of elasticity, torsion and bending}.
\newblock P. Noordhoff Ltd., Groningen, 1953.
\newblock Translated by J. R. M. Radok.

\bibitem{oleinik}
O.~A. Ole{\u\i}nik, A.~S. Shamaev, and G.~A. Yosifian.
\newblock {\em Mathematical problems in elasticity and homogenization},
  volume~26 of {\em Studies in Mathematics and its Applications}.
\newblock North-Holland Publishing Co., Amsterdam, 1992.

\bibitem{vigder}
S.~Vigdergauz.
\newblock Complete elasticity solution to the stress problem in a planar
  grained structure.
\newblock {\em Math. Mech. Solids}, 4(4):407--439, 1999.

\end{thebibliography}
\end{document}